\newcommand{\N}{\mathbb{N}}
\newcommand{\R}{\mathbb{R}}
\newcommand{\C}{\mathbb{C}}
\newtheorem{theorem}{Theorem}[section]
\newtheorem{lemma}[theorem]{Lemma}
\newtheorem{corollary}[theorem]{Corollary}
\newtheorem{remark}[theorem]{Remark}
\def\d{\,\mathrm d}		
\def\cont{\mathscr C}		
\def\bg{{\mathbbm 1}}		
\def\la{\langle}
\def\ra{\rangle}
\begin{document}

\title[Point measurements and the Calder\'on problem in the plane]
{Point measurements for a Neumann-to-Dirichlet map 
and the Calder\'on problem in the plane}

\author{Nuutti Hyv\"onen}
\address{Aalto University, Department of Mathematics and Systems Analysis, FI-00076 Aalto, Finland}
\email{nuutti.hyvonen@aalto.fi}
\urladdr{http://users.tkk.fi/nhyvonen/}

\author{Petteri Piiroinen}
\address{University of Helsinki, P.O. Box 68, FI-00014 Helsinki
 Finland} 
\email{petteri.piiroinen@helsinki.fi}

\author{Otto Seiskari}
\address{Aalto University, Department of Mathematics and Systems Analysis, FI-00076 Aalto, Finland}
\email{otto.seiskari@aalto.fi}

\thanks{This work was supported by the
Academy of Finland (the Centre of Excellence in Inverse Problems Research and project 135979)}

\subjclass[2010]{35R30, 35Q60}

\keywords{Calder\'on problem, Neumann-to-Dirichlet map, point measurements,
(bi)sweep data, partial data}

\hypersetup{%
	pdftitle={\shorttitle},%
	pdfauthor={},%
	pdfsubject={},%
	pdfkeywords={},%
	pdfborder={0 0 0},%
	colorlinks={false}%
}

\begin{abstract}
This work considers properties of the Neumann-to-Dirichlet map
for the conductivity equation under the assumption that the conductivity is
identically one close to the boundary of the examined smooth, bounded
and simply connected domain. It is demonstrated that the so-called
bisweep data, i.e., the (relative) potential differences between two boundary 
points when delta currents of opposite signs are applied at the very same
points, uniquely determine the whole Neumann-to-Dirichlet map. In
two dimensions, the bisweep data extend as a holomorphic function of two
variables to some (interior) neighborhood of the product boundary. It follows
that the whole Neumann-to-Dirichlet map is characterized by the
derivatives of the bisweep data at an arbitrary point.
On the diagonal of the product boundary, these derivatives can be 
given with the help of the derivatives of the (relative) boundary potentials 
at some fixed point caused by the distributional current densities supported 
at the  same point, 
and thus such point measurements uniquely define the Neumann-to-Dirichlet map. 
This observation also leads to a 
new, truly local uniqueness result for the so-called Calder\'on inverse 
conductivity problem.
\end{abstract}

\maketitle

\section{Introduction.}
\label{sec:intro}
In this work, we consider properties of the Neumann-to-Dirichlet operator, 
i.e., the current-to-voltage boundary map for the conductivity equation
\begin{equation}
\label{basic_eq}
\nabla \cdot(\sigma\nabla u) \,=\, 0 \quad \mbox{in $D$}
\end{equation}
assuming the conductivity  $\sigma$ is identically one in some interior 
neighborhood of
the boundary of the smooth, simply connected and bounded domain 
$D \subset \R^n$, $n \geq 2$. In particular, we are interested in what
kind of `point measurements' uniquely characterize the Neumann-to-Dirichlet
map; as a by-product, we will obtain a new result
for the two-dimensional Calder\'on problem with partial data.

Our main analytic tool is the so-called bisweep data, 
which are the (relative) potential differences between two points on 
$\partial D$ when delta distribution current
is driven between the very same points (cf.~\cite{Hakula11}). 
We demonstrate that the bisweep data
uniquely determine the whole Neumann-to-Dirichlet map for any symmetric 
anisotropic $\sigma \in L^{\infty}(D, \R^{n\times n})$, $\sigma \geq c I > 0$, 
conductivity as long as it satisfies the isotropic 
homogeneity assumption near the object boundary. 

The completeness of the bisweep data has remarkable consequences in the
two-dimensional case, when $D$ can be identified with a part of the 
complex plane. By extending an argument in 
\cite{Hakula11,Hanke11b,Hanke11a,Hyvonen11}, the bisweep data can be 
continued as a complex analytic function of two variables to some 
(interior) neighborhood of $\partial D \times \partial D \subset \C^2$. 
In particular,
all bisweep data are determined by the corresponding derivatives at any
fixed point $(z_1,z_2)$ on $\partial D \times \partial D$. In case
$z_1 = z_2 = z$, these
derivatives can be presented with the help of the derivatives at 
$z$ of the relative boundary potentials caused by the current densities that are the 
derivatives of the delta distribution located at $z$. 
In other words, sampling the relative potentials originating from the 
distributional currents supported at a fixed $z \in \partial D$ by the very
same distributions determines the whole Neumann-to-Dirichlet map.
It also follows that the Neumann-to-Dirichlet map can be recovered from 
the bisweep data on  any countably infinite set 
$\Gamma \times \Gamma \subset \partial D
\times \partial D$ with an accumulation point at $(z,z)$.
(A related result that assumes continuum 
measurements but also provides a stability estimate can be found in 
\cite{Alessandrini12}.)
On the other hand, it is known that the Neumann-to-Dirichlet map 
uniquely defines an isotropic $L^\infty$-conductivity in two dimensions,
as demonstrated in~\cite{Astala06}.

The problem of determining an isotropic conductivity in 
\eqref{basic_eq} from
information on the Cauchy data of the corresponding solutions is called the 
Calder\'on  problem. It was proposed by Calder\'on in \cite{Calderon80} and 
tackled by many renowned mathematicians since. In dimensions $n\geq3$, the 
first global uniqueness result for $\mathscr{C}^2$-conductivities was proven in 
\cite{Sylvester87}, and extended for less regular conductivities in 
\cite{Brown03,Paivarinta03}.
In two dimensions the first global uniqueness result was provided by 
\cite{Nachman96} for $\mathscr{C}^2$-conductivities. Subsequently, 
the regularity assumptions were relaxed in \cite{Brown97} and,
in particular, \cite{Astala06} proved uniqueness for general isotropic 
$L^\infty$-conductivities. 

All the above mentioned articles assume that the Cauchy data are known on
all of $\partial D$, but there also exist several results considering the partial
data problem of having access only to some subset(s) of 
$\partial D$. To the best of the authors' knowledge, the most general
result for the partial data case in our two-dimensional setting for the
Calder\'on problem is currently found in \cite{Imanuvilov10}, where it is shown that 
an isotropic conductivity of smoothness $\mathscr{C}^{4, \alpha}$, 
$\alpha > 0$, is uniquely defined by the Dirichlet-to-Neumann map restricted to 
any open nonempty subset of $\partial D$. Compared to this result, the amount 
of data 
needed for our uniqueness theorem is considerably smaller: all
derivatives of the (relative) boundary potentials at a single point, caused
by distributional current densities supported at the same 
point, suffice.
Moreover, our result allows $L^\infty$-conductivities, but only under the 
important and arguably rather restrictive assumption of 
homogeneity of 
$\sigma$ in some interior neighborhood of $\partial D$.
For other results on the Calder\'on problem with partial data, we refer to
\cite{Bukhgeim02,Gebauer08,Isakov07,Kenig07,Kohn84,Kohn85,Knudsen06} and the references therein.

There also exists a vast literature\label{question:aniso}
on the (non)unique solvability of the Calder\'on problem for anisotropic
conductivities; see, e.g., 
\cite{Astala05,Imanuvilov10,Lee89,Nachman96,Sun03,Sylvester90}. In two 
dimensions it has been shown that an anisotropic $L^{\infty}$-conductivity is 
determined by the Dirichlet-to-Neumann map
up to a natural obstruction, i.e., up to a pushforward by an 
$H^1$-diffeomorphism that fixes the object boundary \cite{Astala05}; 
see \cite{Sylvester90} for the original ideas behind
such results. For the case of partial data, the most general result
is arguably found in \cite{Imanuvilov10},
where it is shown that an anisotropic conductivity of the class 
$\mathscr{C}^{7, \alpha}$, $\alpha > 0$, is defined up to the natural 
obstruction by the Dirichlet-to-Neumann map restricted to 
any open nonempty subset of $\partial D$. The results presented in this
work demonstrate that under the assumption of isotropic homogeneity
close to the object boundary, an anisotropic $L^\infty$-conductivity is
defined up to the natural obstruction by the above described 
point measurements for the Neumann-to-Dirichlet map.

The main reason for choosing to work with the 
Neumann-to-Dirichlet
map instead of the Dirichlet-to-Neumann map is that in practical {\em electrical
impedance tomography}, which is the imaging modality corresponding to the
Calder\'on problem, the natural boundary condition on nonaccessible parts of
 the object boundary is the homogeneous Neumann, not the homogeneous Dirichlet condition \cite{Cheney99, Cheng89,Somersalo92}. In particular, the bisweep
data can be approximated by real-life measurements performed by two small 
movable electrodes \cite{Hakula11,Hanke11c,Hyvonen11}.

This text is organized as follows. In Section~\ref{sec:main} we introduce
our setting and formulate the main results. Subsequently, 
Section~\ref{sec:proof} provides the corresponding proofs: In 
Section~\ref{sec:factor} we introduce a useful factorization for the relative
Neumann-to-Dirichlet map, Sections~\ref{sec:extension} and \ref{sec:extension2}
prove the complex analytic extension property for the bisweep data, and 
finally the actual proofs are formulated in Section~\ref{sec:uniqueness}.

\section{The setting and main results}
\label{sec:main}

Let $D \subset \R^n$, $n \geq 2$, be a simply connected and bounded domain 
with a  $\cont^{\infty}$-boundary. Assume that the symmetric conductivity 
$\sigma \in L^{\infty}(D, \R^{n \times n})$ satisfies
$$
\sigma \geq cI \quad \mbox{for } c > 0 \qquad {\rm and} \qquad 
\Sigma := {\rm supp} (\sigma - I) \mbox{ is a compact subset of $D$},
$$
where $I \in \R^{n\times n}$ is the identity matrix and 
the first condition is to be understood in the sense of positive definiteness
almost everywhere.  Our main result for the Calder\'on problem will be 
formulated only for the two-dimensional case, but some 
interesting intermediate theorems are valid independently of the
spatial dimension.

Consider the Neumann boundary value problem
\begin{equation}
\label{forward}
\nabla \cdot(\sigma\nabla u) \,=\, 0 \quad \mbox{in $D$}, \qquad
   \frac{\partial u}{\partial \nu} \,=\, f \quad \mbox{on $\partial D$}
\end{equation}
for a current density $f$ in 
\begin{equation}
\label{meanfree}
H^s_{\diamond}(\partial D) \, = \, \{ g \in H^s(\partial D) \ : \ 
\langle g, 1 \rangle_{\partial D} = 0 \}, 
\end{equation}
with some $s \in \R$. Here and in what follows, $\nu$ denotes the exterior
unit normal field of the respective domain, and we note that the dual of 
$H^s_{\diamond}(\partial D)$ is realized by
\begin{equation}
\label{uptoconstant}
H^{-s}(\partial D) / \C := H^{-s}(\partial D) / {\rm span} \{ 1 \}, \qquad s\in \R.
\end{equation}
It follows from standard theory for elliptic
boundary value problems that \eqref{forward} has a unique solution $u_\sigma$
in $(H^{\min \{1, s+3/2 \} }(D)\cap H^1_{\rm loc}(D) ) / \C$ and that 
the corresponding Neumann-to-Dirichlet map
\begin{equation}
\label{neumann:dirichlet:Hs:bounded}
\Lambda_\sigma : f \mapsto u_\sigma|_{\partial D}, \quad H^s_{\diamond}(\partial D) \to H^{s+1}(\partial D) / \C
\end{equation}
is well defined and bounded for any $s \in \R$ (cf., e.g., \cite{Hanke11a,Lions72}). We denote by $\bg \in L^{\infty}(D)$ the homogeneous unit conductivity
and note that the relative Neumann-to-Dirichlet map
$$
\Lambda_\sigma - \Lambda_\bg: f \mapsto  (u_\sigma - u_\bg)|_{\partial D}, \quad
\mathscr{D}'_{\diamond}(\partial D) \to \mathscr{D}(\partial D)/\C
$$
is well defined  (and bounded as an operator from $H^s_\diamond(\partial D)$
to $H^{-s}(\partial D) / \C$ for any $s \in \R$). Here, the mean-free
distributions $\mathscr{D}'_{\diamond}(\partial D)$ and the quotient space
of smooth boundary potentials  $\mathscr{D}(\partial D)/\C$ are defined
in accordance with \eqref{meanfree} and~\eqref{uptoconstant}; we also
use similar notations, $\mathscr{D}'(\partial D)/\C$ and 
$\mathscr{D}_{\diamond}(\partial D)$, when the roles of distributions and 
smooth test functions are reversed. This regularity result can be deduced
from standard elliptic theory (cf., e.g., \cite{Hanke11a,Lions72}), 
and it also follows from $\Lambda_\sigma$ and $\Lambda_\bg$ being 
pseudodifferential
operators with the same symbol because  $\sigma$ and $\bg$ coincide in some
interior neighborhood of $\partial D$ \cite{Lee89}.

We define the bisweep data as the function 
\begin{equation}
\label{bisweep}
\varsigma_\sigma: (x,y) \mapsto 
\left\langle (\delta_{x} - \delta_{y}), (\Lambda_\sigma - \Lambda_\bg) (\delta_{x} - \delta_{y}) \right \rangle_{\partial D}, \quad \partial D \times \partial D \to \R,
\end{equation}
where $\delta_z$ denotes the delta distribution at $z$ on $\partial D$.
This is a generalization of the concept of (standard) 
sweep data from \cite{Hakula11}, or the other way around, sweep data are the
restriction of bisweep data onto $\partial D \times \{y_0 \}$ for some 
fixed $y_0 \in \partial D$. What is more, the bisweep
data can be approximated by two-electrode measurements in the framework
of the realistic {\em complete electrode model} \cite{Hanke11c}.

Our first result
shows that the bisweep data carry the same information as the whole
(relative) Neumann-to-Dirichlet map; the proof is based on a simple
polarization identity.

\begin{theorem}
\label{thm:polarization}
Let the above assumptions on $D$ and $\sigma$ hold.
Then, the bisweep data $\varsigma_\sigma: \partial D \times \partial D \to \R$
determine the whole Neumann-to-Dirichlet map $\Lambda_\sigma$.
\end{theorem}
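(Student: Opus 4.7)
My strategy is to reconstruct the full symmetric bilinear form
$$
B(f,g) := \langle f, (\Lambda_\sigma - \Lambda_\bg) g\rangle_{\partial D}
$$
on $H^s_\diamond(\partial D) \times H^s_\diamond(\partial D)$ from the bisweep data, and then recover $\Lambda_\sigma$ as the unique operator inducing $B$, translated by the explicitly known $\Lambda_\bg$. The two properties of $B$ that I would lean on are its \emph{symmetry}, which comes from reciprocity applied to two solutions of~\eqref{forward} via Green's identity (the constant-mode ambiguity of $(\Lambda_\sigma - \Lambda_\bg)g$ is annihilated by $f$ being mean-free), and the immediate identity $\varsigma_\sigma(x,y) = B(\delta_x - \delta_y, \delta_x - \delta_y)$ following from~\eqref{bisweep}.

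The central step is a four-point polarization identity expressing $B$ on general pair differences in terms of $\varsigma_\sigma$. To handle the constant-mode ambiguity cleanly and avoid the formally undefined quantities $B(\delta_a,\delta_b)$, I would fix a reference point $x_0 \in \partial D$ and put $\phi_a := \delta_a - \delta_{x_0} \in H^s_\diamond(\partial D)$ for $s < -(n-1)/2$. Since $\delta_a - \delta_b = \phi_a - \phi_b$, the quadratic expansion
$$
\varsigma_\sigma(a,b) = B(\phi_a, \phi_a) - 2 B(\phi_a, \phi_b) + B(\phi_b, \phi_b)
$$
is now made of well-defined terms. Forming an appropriate alternating sum of four such expansions and observing that all diagonal contributions $B(\phi_a,\phi_a)$ cancel in pairs yields
$$
2 B(\delta_x - \delta_y, \delta_p - \delta_q) \,=\, \varsigma_\sigma(x,q) + \varsigma_\sigma(y,p) - \varsigma_\sigma(x,p) - \varsigma_\sigma(y,q),
$$
which determines $B$ on all pairs of simple pair differences.

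To finish, I would extend this knowledge of $B$ to the whole space. Every mean-free finite delta combination $\sum_i a_i \delta_{x_i}$ (with $\sum_i a_i = 0$) is automatically a combination $\sum_i a_i \phi_{x_i}$ of the pair differences introduced above, so bilinearity immediately gives $B$ on all pairs of such mean-free finite combinations. A routine Riemann-sum or mollification argument shows that these are dense in $H^s_\diamond(\partial D)$ for the same range of $s$; combined with the continuity of $B$ on the appropriate Sobolev pair, this pins down $B$ everywhere and hence determines $R := \Lambda_\sigma - \Lambda_\bg$ and $\Lambda_\sigma$.

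The only delicate point I anticipate is the constant-mode bookkeeping: the pointwise evaluations $(R\delta_a)(b)$ have no invariant meaning, and only the combinations that occur in $\varsigma_\sigma$ and in the polarization identity are unambiguous. The reference-point reformulation $\phi_a = \delta_a - \delta_{x_0}$ sidesteps this issue entirely and, once in place, reduces everything to routine symmetric bilinear algebra plus the standard density of finitely supported measures in negative-order Sobolev spaces.
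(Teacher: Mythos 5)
Your proposal is correct and follows essentially the same route as the paper: a polarization identity expressing $\langle \delta_x-\delta_y,(\Lambda_\sigma-\Lambda_\bg)(\delta_p-\delta_q)\rangle_{\partial D}$ through four (in the paper, after using $\varsigma_\sigma(x,x)=0$, three) values of $\varsigma_\sigma$, followed by the density of mean-free finite delta combinations in $H^s_\diamond(\partial D)$ for sufficiently negative $s$ and the boundedness of $\Lambda_\sigma-\Lambda_\bg$ on that scale. Your reference-point device $\phi_a=\delta_a-\delta_{x_0}$ is a slightly cleaner way of handling the quotient-by-constants bookkeeping than the paper's ``grounding at $x$'' remark, but the underlying argument is the same.
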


\begin{proof}
As the considered Neumann-to-Dirichlet maps are self-adjoint, we have
\begin{align*}
2 \, [ (\Lambda_\sigma & - \Lambda_\bg) (\delta_x - \delta_y) ] (z)
-  2\, [ (\Lambda_\sigma - \Lambda_\bg) (\delta_x - \delta_y) ] (x) \\[1mm]
& =   \langle (\delta_z - \delta_x), (\Lambda_\sigma - \Lambda_\bg)  
(\delta_x - \delta_y) \rangle_{\partial D} + \langle (\delta_x - \delta_y), (\Lambda_\sigma - \Lambda_\bg)  
(\delta_z - \delta_x) \rangle_{\partial D} \\[1mm]
& =  \langle (\delta_z - \delta_x), (\Lambda_\sigma - \Lambda_\bg) (\delta_z - \delta_y) \rangle_{\partial D}
+ \langle (\delta_x - \delta_y), (\Lambda_\sigma - \Lambda_\bg)  
(\delta_z - \delta_y) \rangle_{\partial D} \\[1mm]
& \ \ + \langle (\delta_z - \delta_x), (\Lambda_\sigma - \Lambda_\bg)  
(\delta_x - \delta_z) \rangle_{\partial D} 
+\langle (\delta_x - \delta_y), (\Lambda_\sigma - \Lambda_\bg)  
(\delta_y - \delta_x) \rangle_{\partial D} \\[1mm]
& = \varsigma_\sigma(z,y) - \varsigma_\sigma(z,x) - \varsigma_\sigma(x,y),
\end{align*}
for any $x,y,z \in \partial D$. Now, we fix $x, y$ and let $z$ vary 
over all location on $\partial D$, which means that the left-hand side
of the above chain of equalities samples one representative in the quotient
equivalence class $2 (\Lambda_\sigma  - \Lambda_\bg) (\delta_x - \delta_y)$,
namely the one with grounding at $x$.
Because for each $z$ the corresponding right-hand side is a linear
combination of three bisweep data, we deduce that the knowledge of 
$\varsigma_\sigma: \partial D \times \partial D \to \R$ determines 
$(\Lambda_\sigma  - \Lambda_\bg) (\delta_x - \delta_y)$ for any 
$x,y \in \partial D$. On the other hand, the linear span of the set
\begin{equation}
\label{diff_deltat}
\{ \delta_x - \delta_y \}_{x,y \in \partial D}
\end{equation}
is dense in $H^s_\diamond (\partial D)$ for small enough $s = s_n \in \R$
due to, say, the denseness of $\mathscr{D}_\diamond(\partial D)$ in $H^s_\diamond (\partial D)$, a suitable quadrature rule on $\partial D$, and the Sobolev 
embedding theorem for the dual $H^{-s}(\partial D) / \C$. (In fact,
the linear span of \eqref{diff_deltat} remains dense if $y \in \partial D$ is fixed).
This completes the proof.
\end{proof}

Together with the simple polarization argument in the proof of Theorem \ref{thm:polarization},
our main theoretical tool is the fact that in two dimensions 
$\varsigma_\sigma$ extends as
a complex analytic function of two variables to some (interior) 
neighborhood of $\partial D \times \partial D$, 
which will be proven in 
Sections \ref{sec:extension} and \ref{sec:extension2}. 
This result
leads to the following local characterization of the Neumann-to-Dirichlet
map; see Section \ref{sec:uniqueness} for the proof. Here and in what follows, 
we denote by
$$
\mathfrak{D}_z  =  \{ f \in \mathscr{D}_\diamond'(\partial D) \ : 
\ {\rm supp}\, f = z \in \partial D \}
$$
the subspace of mean-free distributions that are supported at some fixed 
$z \in \partial D$.

\begin{theorem}
\label{thm:main}
Let the above assumptions on $D$ and $\sigma$ hold, and suppose furthermore
that $n=2$. 
For any fixed $z \in \partial D$, the point measurements of the type
\begin{equation}
\label{point}
\langle f,
(\Lambda_\sigma - \Lambda_\bg) f \rangle_{\partial D}, 
\qquad f \in \mathfrak{D}_z
\end{equation}
determine the whole Neumann-to-Dirichlet map $\Lambda_\sigma$.
\end{theorem}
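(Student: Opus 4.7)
The plan is to show that the point measurements \eqref{point} determine the bisweep data $\varsigma_\sigma$ everywhere on $\partial D \times \partial D$; Theorem~\ref{thm:polarization} then yields $\Lambda_\sigma$.

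First I would identify $\mathfrak{D}_z$ as the linear span of the tangential derivatives $\{\delta_z^{(j)}\}_{j\geq 1}$ of the Dirac distribution at $z$, since every distribution supported at a single point is a finite sum of such derivatives and mean-freeness excludes $j=0$. Self-adjointness of $\Lambda_\sigma - \Lambda_\bg$ together with a polarization identity then converts the given quadratic-form data into the full bilinear pairings
\[
a_{jk} \, := \, \langle \delta_z^{(j)},\, (\Lambda_\sigma - \Lambda_\bg)\, \delta_z^{(k)} \rangle_{\partial D}, \qquad j, k \geq 1.
\]

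Next I would interpret the $a_{jk}$ as Taylor coefficients of $\varsigma_\sigma$ at $(z,z)$. Since $\sigma = \bg$ near $\partial D$, the operator $\Lambda_\sigma - \Lambda_\bg$ is smoothing and admits a symmetric smooth Schwartz kernel $\kappa$, so~\eqref{bisweep} reads $\varsigma_\sigma(x,y) = \kappa(x,x) - 2\kappa(x,y) + \kappa(y,y)$; this identifies each $a_{jk}$, up to an explicit sign, with the mixed tangential derivative $\partial_{t_1}^j \partial_{t_2}^k \varsigma_\sigma(z,z)$ in a smooth local parametrization of $\partial D$ around $z$. The remaining ``pure'' tangential derivatives $\partial_{t_1}^j \varsigma_\sigma(z,z)$ are obtained recursively from the mixed ones by differentiating the diagonal identity $\varsigma_\sigma(t,t)=0$ and using the symmetry $\varsigma_\sigma(x,y) = \varsigma_\sigma(y,x)$. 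Hence the full array of tangential partial derivatives of $\varsigma_\sigma$ at $(z,z)$ is pinned down by the point measurements.

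To close the argument I would invoke the holomorphic extension result of Sections~\ref{sec:extension}--\ref{sec:extension2}: $\varsigma_\sigma$ extends to a holomorphic function $\tilde\varsigma$ of two complex variables on a neighborhood of $\partial D \times \partial D$ in $\C^2$, which may be chosen connected because $\partial D$ is a connected Jordan curve. A Fa\`a di Bruno computation applied to the pullback $\tilde\varsigma \circ (\gamma,\gamma)$ by a smooth boundary chart $\gamma$ relates the tangential derivatives of $\varsigma_\sigma$ at $(z,z)$ to the holomorphic Taylor coefficients $\partial_{z_1}^j \partial_{z_2}^k \tilde\varsigma(z,z)$ by a triangular invertible linear system whose diagonal factors are powers of $\gamma'(0) \neq 0$. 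Therefore these holomorphic Taylor coefficients are determined, fixing $\tilde\varsigma$ in a complex polydisc around $(z,z)$ and, by analytic continuation on the connected extension domain, on all of $\partial D \times \partial D$. Applying Theorem~\ref{thm:polarization} then finishes the proof.

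I expect the main obstacle to be in this last step, namely in keeping the passage from $C^\infty$-tangential differentiation along the merely smooth curve $\partial D$ to complex analytic differentiation in $\C^2$ honest, and in arranging the holomorphic extension neighborhood connectedly enough that a single polydisc around $(z,z)$ together with analytic continuation suffices to recover $\varsigma_\sigma$ on all of $\partial D \times \partial D$.
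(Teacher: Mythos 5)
Your proposal is correct and follows essentially the same route as the paper: polarization to recover the bilinear pairings $\langle \delta_z^{(j)}, (\Lambda_\sigma - \Lambda_\bg)\, \delta_z^{(k)} \rangle_{\partial D}$, identification of these with the partial derivatives of the bisweep data at $(z,z)$ (the pure derivatives being recovered from the vanishing on the diagonal, exactly as in \eqref{eq:bisweep:kth:derivative}), and then the holomorphic extension of Sections~\ref{sec:extension}--\ref{sec:extension2} together with Theorem~\ref{thm:polarization}. The only real difference is cosmetic, and it resolves the worry you raise at the end: the paper works throughout with the angular bisweep data $\tilde\varsigma_\sigma(\theta_1,\theta_2)=\varsigma_\sigma(\Phi(e^{i\theta_1}),\Phi(e^{i\theta_2}))$, differentiating it directly via the Leibniz rule of Lemma~\ref{lemma:bilinear:derivative}; since this particular pullback is itself the restriction of a holomorphic function of $(\theta_1,\theta_2)$ and hence real-analytic, no Fa\`a di Bruno comparison with an arbitrary merely smooth chart is needed.
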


Since the knowledge of the Neumann-to-Dirichlet map is equivalent to
that of the Dirichlet-to-Neumann map, the works of Astala and P\"aiv\"arinta
\cite[Theorem 1]{Astala06} and Astala, Lassas and P\"aiv\"arinta \cite[Theorem 1]{Astala05}
provide us immediately with\label{aniso:calderon}
the following uniqueness result for the Calder\'on problem.

\begin{corollary}
\label{cor:main}
Let the assumptions of Theorem \ref{thm:main} hold. Then, the point 
measurements~\eqref{point} for any fixed $z \in \partial D$
 uniquely define an isotropic conductivity $\sigma$. If the conductivity
 $\sigma$ is anisotropic, the point measurements~\eqref{point} determine 
$\sigma$ uniquely up to a pushforward by a boundary-fixing 
$H^1$-diffeomorphism of $D$ onto itself.
\end{corollary}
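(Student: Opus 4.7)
The proof plan is essentially a concatenation: Theorem~\ref{thm:main} upgrades the point measurements \eqref{point} to full knowledge of $\Lambda_\sigma$, and then the cited results of Astala--P\"aiv\"arinta and Astala--Lassas--P\"aiv\"arinta upgrade that to knowledge of $\sigma$ (modulo the natural obstruction in the anisotropic case). Since those papers are phrased in terms of the Dirichlet-to-Neumann map, the only substantive step is to check that under our standing hypotheses $\Lambda_\sigma$ and the Dirichlet-to-Neumann map carry exactly the same information.

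First I would invoke Theorem~\ref{thm:main} directly: assuming two conductivities $\sigma_1$ and $\sigma_2$ meeting all standing hypotheses (symmetric, $L^\infty$, $\sigma_j\ge cI$, equal to $I$ near $\partial D$) produce identical point measurements at some fixed $z\in\partial D$, the theorem yields $\Lambda_{\sigma_1}=\Lambda_{\sigma_2}$ as bounded operators $H^s_\diamond(\partial D)\to H^{s+1}(\partial D)/\C$ for every $s\in\R$.

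Next I would argue that this is equivalent to the equality of the associated Dirichlet-to-Neumann maps $\Lambda_{\sigma_j}^{-1}$. Because $\sigma_j\equiv I$ in an interior neighborhood of $\partial D$, the forward Neumann problem \eqref{forward} is uniquely solvable in the quotient $(H^1(D)\cap H^1_{\mathrm{loc}}(D))/\C$, and the corresponding Dirichlet problem is uniquely solvable for any boundary datum in $H^{1/2}(\partial D)/\C$. Consequently $\Lambda_{\sigma_j}\colon H^{-1/2}_\diamond(\partial D)\to H^{1/2}(\partial D)/\C$ is a bijection whose inverse is exactly the Dirichlet-to-Neumann map modulo constants, so $\Lambda_{\sigma_1}=\Lambda_{\sigma_2}$ forces the Dirichlet-to-Neumann maps of $\sigma_1$ and $\sigma_2$ to coincide.

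Finally I would conclude by quoting the relevant uniqueness theorems. In the isotropic case, \cite[Theorem~1]{Astala06} gives $\sigma_1=\sigma_2$ almost everywhere in $D$, since both conductivities are in $L^\infty(D)$ and bounded below. In the anisotropic case, \cite[Theorem~1]{Astala05} yields an $H^1$-diffeomorphism $F\colon D\to D$ with $F|_{\partial D}=\mathrm{id}$ such that $\sigma_2=F_*\sigma_1$; this is the natural obstruction and is exactly the conclusion of the corollary. I expect no real obstacle here beyond being careful that the normalization conventions (Neumann vs.\ Dirichlet data, mean-free vs.\ modulo-constants) match between our setup and the cited papers, which is why the second paragraph above — identifying $\Lambda_\sigma$ with the inverse of the Dirichlet-to-Neumann map on the appropriate quotient spaces — is the only step that requires any genuine verification.
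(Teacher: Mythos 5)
Your proposal is correct and follows exactly the route the paper takes: Theorem~\ref{thm:main} yields the full Neumann-to-Dirichlet map, which is equivalent to the Dirichlet-to-Neumann map, and then \cite[Theorem~1]{Astala06} and \cite[Theorem~1]{Astala05} give the isotropic and anisotropic conclusions respectively. The only difference is that you spell out the invertibility argument identifying $\Lambda_\sigma$ with the inverse of the Dirichlet-to-Neumann map on the quotient spaces, which the paper simply asserts as known.
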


Compared to previous local uniqueness results for the Calder\'on problem,
Corollary \ref{cor:main} is truly local: the applied
current patterns are the distributions supported at a single point and 
the resulting relative boundary potentials are sampled by these 
same distributions; cf.~\cite[Corollary 1.1 and Theorem 1.2]{Imanuvilov10}.
On the negative side, the assumption that $D$ is two-dimensional and
$\sigma$ equals $1$ close to $\partial D$ seems to be inherent in the proof of 
Theorem~\ref{thm:main}.

\begin{remark}
\label{countably:infinite}
  Suppose $\Gamma \times \Gamma \subset \partial D \times \partial
  D$ is a countably infinite set with the accumulation point $(z,z)$.
  The restriction of the bisweep data $\varsigma_\sigma$ to $\Gamma \times
  \Gamma$ determines the point measurements~\eqref{point} and
  therefore, Theorem~\ref{thm:main} and Corollary~\ref{cor:main} become
  applicable. Indeed, 
  it follows straightforwardly from the polarization 
  identity in the proof of Theorem~\ref{thm:polarization} that the assumed 
  measurements define $\langle f, (\Lambda_\sigma -
  \Lambda_{\bf 1}) g \rangle_{\partial D}$ for all $f, g \in \mathfrak U_\Gamma$
  where 
$$
\mathfrak{U}_\Gamma = {{\rm span}} \{ \delta_x \}_{x \in \Gamma} \cap 
\mathscr{D}_\diamond'(\partial D).
$$
  Since $\mathfrak{D}_z$ belongs to the closure of $\mathfrak{U}_\Gamma$ in the 
(weak) topology of $\mathscr{D}'(\partial D)$,
  the point measurements~\eqref{point} are determined.
\end{remark}

\section{Proof of the main results}
\label{sec:proof}

\subsection{A factorization of the N-to-D map}
\label{sec:factor}

Choose $\Omega \subset \R^n$ to be a simply connected 
$\mathscr{C}^{\infty}$-domain, such that $\Sigma \subset \Omega$ and 
$\overline{\Omega} \subset D$. We define an auxiliary operator 
$$
A : f \mapsto \frac{\partial u_\bg}{\partial \nu}\big|_{\partial \Omega},
\quad \mathscr D'_\diamond(\partial D) \rightarrow \mathscr D_\diamond(\partial \Omega),
$$
where $u_\bg$ is the background solution corresponding to the boundary current pattern~$f$. Notice that $u_{\bg}$ is well defined for any  
$f \in \mathscr D'_\diamond(\partial D)$ because each such (compactly supported)
current density
belongs to $H^s_\diamond(\partial D)$ for some $s=s_f \in \R$, and 
consequently $A$ is also well defined due to the Gauss
divergence theorem and interior
elliptic regularity  (cf., e.g., \cite{Lions72}). Moreover, $A$ is bounded as 
a map from $H^s_{\diamond}(\partial D)$ to $H^{-s}_{\diamond}(\partial \Omega)$
for any $s \in \R$; see, e.g., \cite{Lions72}. The following factorization
can be considered a variant of \cite[Theorem 3.1]{Hanke11b}.

\begin{theorem}\label{thm:factorization}
The operator $\Lambda_\sigma - \Lambda_\bg : \mathscr D'_\diamond(\partial D) \rightarrow \mathscr D(\partial D)/\C$ can be factorized as
\begin{equation}
\label{factorization}
	\Lambda_\sigma - \Lambda_\bg = A' G A
\end{equation}
where $G : \mathscr D'_\diamond(\partial \Omega) \rightarrow \mathscr D(\partial \Omega)/\C$ can be interpreted as a bounded map from $H^{s}_\diamond(\partial \Omega)$ to $H^{-s}(\partial \Omega)/\C$ for any $s \in \R$.
\end{theorem}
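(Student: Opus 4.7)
The approach is to test $\Lambda_\sigma - \Lambda_\bg$ against background solutions via Green's identity in the annular region $D\setminus\overline{\Omega}$, and to read off $G$ from the structure of the resulting boundary expression on $\partial\Omega$.

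Set $w := u_\sigma^f - u_\bg^f$ for the solutions of \eqref{forward} corresponding to $\sigma$ and to the background conductivity. Since $\sigma \equiv I$ on $D\setminus\overline{\Omega}$, the function $w$ is harmonic there, and the coinciding Neumann data on $\partial D$ force $\partial_\nu w|_{\partial D}=0$. Pairing $w$ with the globally harmonic $u_\bg^g$ via Green's second identity in $D\setminus\overline{\Omega}$, and carefully tracking outward normals, produces
\[
\bigl\langle g,(\Lambda_\sigma-\Lambda_\bg)f\bigr\rangle_{\partial D} \;=\; \int_{\partial\Omega}\bigl(w\,\partial_\nu u_\bg^g - u_\bg^g\,\partial_\nu w\bigr)\,dS,
\]
with $\nu$ the outward unit normal of $\Omega$. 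By definition, $\partial_\nu u_\bg^g|_{\partial\Omega}=Ag$, whereas $u_\bg^g|_{\partial\Omega}$ equals, modulo constants, the image $\Lambda_\bg^\Omega(Ag)$ under the interior background Neumann-to-Dirichlet map on $\Omega$. The constant ambiguity does not contribute, because $\int_{\partial\Omega}\partial_\nu w=0$: I would verify this by applying the divergence theorem to $\sigma\nabla u_\sigma^f$ and to $\nabla u_\bg^f$ separately on $\Omega$, using $\sigma=I$ on $\partial\Omega$. Invoking the self-adjointness of $\Lambda_\bg^\Omega$, the identity then becomes
\[
\bigl\langle g,(\Lambda_\sigma-\Lambda_\bg)f\bigr\rangle_{\partial D} \;=\; \bigl\langle Ag,\; w|_{\partial\Omega} - \Lambda_\bg^\Omega\bigl(\partial_\nu w|_{\partial\Omega}\bigr)\bigr\rangle_{\partial\Omega}.
\]

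The next step is to argue that the bracketed expression depends on $f$ only through $Af$, so that it can legitimately be written as $G(Af)$. The crucial point is injectivity of $A$: if $Af=0$, the interior Neumann problem for $u_\bg^f$ on $\Omega$ has zero flux and hence constant solution, and unique continuation for harmonic functions in the connected domain $D$ promotes this to $u_\bg^f$ constant on all of $D$, whence $f=0$. This pins down $G$ on the range of $A$ and produces the factorization there. To promote $G$ to a bounded operator $H^s_\diamond(\partial\Omega)\to H^{-s}(\partial\Omega)/\C$ for every $s\in\R$, I would define it intrinsically as the boundary trace of a transmission problem on $D$: for $h\in\mathscr{D}'_\diamond(\partial\Omega)$, solve for $W$ satisfying $\nabla\cdot(\sigma\nabla W)=0$ in $\Omega$, $\Delta W=0$ in $D\setminus\overline{\Omega}$, vanishing Neumann trace on $\partial D$, and interface conditions on $\partial\Omega$ calibrated so that $W=w$ in the case $h=Af$; the required Sobolev mapping properties then follow from standard elliptic regularity for this transmission problem.

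The main obstacle will be the last step: selecting the interface data in the transmission problem so that the intrinsic $G$ agrees with the algebraic formula on the range of $A$ and is well posed modulo constants for arbitrary $h$. Once this is arranged, the identity $\Lambda_\sigma-\Lambda_\bg = A'GA$ is an immediate consequence of the preceding Green-identity computation.
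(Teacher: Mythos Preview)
Your approach is correct but takes a genuinely different route from the paper. The paper's proof is essentially two lines: it invokes a pre-existing factorization $\Lambda_\sigma-\Lambda_\bg=B'FB$ from \cite[Corollary~3.2]{Hanke11a}, where $B:f\mapsto u_\bg^f|_{\partial\Omega}$ is the Dirichlet trace map and $F$ is already known to be bounded from $H^{-s+1}(\partial\Omega)/\C$ to $H^{s-1}_\diamond(\partial\Omega)$ for all $s$; then it simply observes that $B=\lambda A$, with $\lambda=\Lambda_\bg^\Omega$ the interior background Neumann-to-Dirichlet map on $\Omega$, so that $G:=\lambda' F\lambda$ inherits the required mapping properties automatically. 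Your argument, by contrast, derives the factorization from scratch via Green's identity in the annulus, which is more self-contained and in effect re-proves the content of the cited result. The price is exactly the step you flag as the main obstacle: defining $G$ intrinsically on all of $H^s_\diamond(\partial\Omega)$ with the claimed bounds. Your transmission-problem idea can be made to work (a correct choice of interface data is a jump $h$ in the normal derivative and a jump $\Lambda_\bg^\Omega h$ in the trace across $\partial\Omega$, which reproduces $W=u_\sigma^f$ in $\Omega$ and $W=w$ in $D\setminus\overline\Omega$ when $h=Af$), but the paper avoids this analysis entirely by outsourcing the properties of $F$ to the reference. What your route buys is independence from that external citation; what the paper's route buys is brevity and the Sobolev bounds on $G$ without any new PDE argument.
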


\begin{proof}
According to \cite[Corollary 3.2]{Hanke11a}, the Neumann-to-Dirichlet  
map can be factorized as
\[
	\Lambda_\sigma - \Lambda_\bg = B' F B,
\]
where $B : \mathscr D'_\diamond(\partial D) \rightarrow \mathscr D(\partial \Omega)/\C$ maps $f$ to $u_\bg|_{\partial \Omega}$ and $F : H^{-s+1}(\partial \Omega)/\C \rightarrow H^{s-1}_\diamond(\partial \Omega)$ is a bounded operator for any
$s \in \R$.
Clearly, one can write $B = \lambda A$, where $\lambda : H^{-s}_\diamond(\partial \Omega) \rightarrow H^{-s+1}(\partial \Omega)/\C$, the Neumann-to-Dirichlet map for the Laplacian in $\Omega$, is bounded (cf.~\eqref{neumann:dirichlet:Hs:bounded}).
Therefore,
\[
	\Lambda_\sigma - \Lambda_\bg = A' \lambda' F \lambda A =: A' G A,
\]
where the bounded dual operator $\lambda' : H^{s-1}_\diamond(\partial \Omega) \rightarrow H^{s}(\partial \Omega)/\C$ is, in fact, identical to $\lambda$, but interpreted as an operator between different Sobolev spaces. 
Consequently, $G = \lambda' F \lambda : H^{-s}_\diamond(\partial \Omega) \rightarrow H^{s}(\partial \Omega)/\C$ is bounded for any $s \in \R$, which completes the proof.
\end{proof}

In what follows, we  interpret $G$ to be a bounded 
operator from $L^2(\partial \Omega)$ to itself by identifying
it with
$$
P' G P: L^2(\partial \Omega) \to L^2(\partial \Omega)
$$
where $P: L^2(\partial \Omega) \to L^2_\diamond(\partial \Omega)$ is
an orthogonal projection and $P': L^2(\partial \Omega)/ \C \to 
L^2(\partial \Omega)$ is its dual. It is easy to check that $P'$ 
picks the unique mean-free element of an equivalence
class in $L^2(\partial \Omega)/ \C$. We continue to denote this newly 
defined $G$ by the original symbol, and note that the factorization 
\eqref{factorization} remains valid because the range of $A$ consists
of mean-free elements and $A'$ does not `see' the 
constant function.

In particular, take note that Theorem~\ref{thm:factorization} provides
the presentation
\begin{equation}
\label{bisweep2}
\varsigma_\sigma(x,y) = \langle A(\delta_x - \delta_y), GA (\delta_x - \delta_y)
\rangle_{\partial \Omega}, \qquad x,y \in \partial D
\end{equation}
for the bisweep data defined originally by \eqref{bisweep}.   

\subsection{Holomorphic extension of bisweep data in the
unit disk}
\label{sec:extension}

In this section, we assume that $D = B \subset \R^2$ is the open unit disk and 
note that 
the gradient of the corresponding background solution for the Laplacian
$u_\bg^{z_1,z_2}$, $z_1,z_2 \in \partial D$, with the boundary current 
density  $f = \delta_{z_2} - \delta_{z_1}$ is  (cf., e.g., \cite{Hanke11a})
\begin{equation}
\label{eq:nabla:u0:real}
\begin{split}
	\nabla u_\bg^{z_1,z_2}(x)
	&= \frac1{\pi}\left( \frac{x-z_1}{|x-z_1|^2} - \frac{x-z_2}{|x-z_2|^2} \right), \qquad x \in D.
\end{split}
\end{equation}
We identify the mapping $(x,z_1,z_2) \mapsto \nabla u_{\bg}^{z_1,z_2}(x)$ (from $D \times \partial D^2 \subset \R^2 \times \R^2 \times \R^2$ to $\R^2$) with a complex function $(\xi,\zeta_1,\zeta_2) \mapsto v(\xi,\zeta_1,\zeta_2)$, which is a map from $D \times \partial D^2 \subset \C \times \C^2$ to $\C$ 
(cf.~\cite{Hanke11b,Hanke11a,Hyvonen11}). To be more precise,
\begin{equation}
\label{eq:sweep:data:v}
	v(\xi,\zeta_1,\zeta_2)
= \frac1{\pi}\left( \tfrac{\xi-\zeta_1}{(\xi-\zeta_1)\overline{(\xi-\zeta_1)}} - 
\tfrac{\xi-\zeta_2}{(\xi-\zeta_2)\overline{(\xi-\zeta_2)}} \right)
= \frac1{\pi}\left(  \frac{\zeta_1}{\zeta_1 \overline \xi- 1} - \frac{\zeta_2}{\zeta_2 \overline \xi- 1} \right),
\end{equation}
where we took advantage of the fact that $|\zeta_1|^2 = |\zeta_2|^2 = 1$. For fixed 
$\xi \in D$, $v(\xi, \cdot,\cdot)$ extends as a holomorphic function to the set $(\C \setminus \{ 1/\overline{\xi} \}) \times (\C \setminus \{ 1/\overline{\xi} \}) = (\C \setminus \{ 1/\overline{\xi} \})^2 \subset \C^2$; let us denote this extension by $w_1(\xi, \cdot, \cdot)$.
Analogously, the complex conjugate of $v$, i.e.,
\[
	\overline{v(\xi,\zeta_1,\zeta_2)} = \frac1{\pi}\left( \frac1{\xi - \zeta_1} - \frac1{\xi- \zeta_2} \right),
\]
can be extended  as a holomorphic function, 
say $w_2(\xi, \cdot, \cdot)$, to $(\C \setminus \{\xi\})^2$. It thus
follows that also the real and imaginary parts of $v(\xi, \cdot,\cdot)$ have holomorphic 
extensions
\begin{equation}
\label{holo_real_imag}
\begin{split}
v_1(\xi, \zeta_1, \zeta_2) &= \frac{1}{2}\big(w_1(\xi,\zeta_1, \zeta_2) + w_2(\xi, \zeta_1, \zeta_2)\big), \\ 
v_2(\xi, \zeta_1, \zeta_2) &= \frac{1}{2 i}\big(w_1(\xi,\zeta_1, \zeta_2) - w_2(\xi, \zeta_1, \zeta_2)\big)
\end{split}
\end{equation}
to $(\C \setminus (\{ \xi \} \cup \{1/\overline{\xi}\}))^2$. We denote 
$V(\xi,\zeta_1,\zeta_2)= [v_1(\xi,\zeta_1,\zeta_2), v_2(\xi,\zeta_1,\zeta_2)]^{\rm T}$.

Let $U \subset \C$ be an open neighbourhood of $\partial D$ such that $\overline \Omega \cap \overline U = \emptyset$ and $\overline{\Omega^*} \cap \overline U = \emptyset$, where $\Omega^*$ is the reflection of $\Omega$ with respect to
the unit circle $\partial D$.
Due to \eqref{bisweep2}, the bisweep data can be identified with the restriction $\varsigma_\sigma(\zeta_1,\zeta_2)|_{(\partial D)^2}$ of the function $\varsigma_\sigma(\zeta_1,\zeta_2) : U^2 \rightarrow \C$,
\begin{equation}
\begin{split}
\label{extension}
	\varsigma_\sigma(\zeta_1,\zeta_2) = 
	\langle A(\zeta_1,\zeta_2), G A(\zeta_1,\zeta_2) \rangle_{\partial \Omega} 
	= \int_{\partial \Omega} h(\xi,\zeta_1,\zeta_2) (G_y h(y,\zeta_1,\zeta_2))(\xi) \d s_\xi,
\end{split}
\end{equation}
where $\d s_\xi$ corresponds to the (real) arc length measure on 
$\partial \Omega$ and $A : U^2 \rightarrow L^2(\partial \Omega)$ is defined by
\begin{equation*}
	(A(\zeta_1,\zeta_2))(\xi) = h(\xi,\zeta_1,\zeta_2) := \nu_\xi \cdot V(\xi,\zeta_1,\zeta_2),
\end{equation*} 
with $\nu_\xi$ being the (real) unit normal of $\partial \Omega$ at $\xi$. 
We will now show that $\varsigma_\sigma$ is holomorphic in $U^2$.

\begin{lemma}\label{lemma:A:holomorphic}
The operator $A(\zeta_1,\zeta_2)$ is holomorphic in $\zeta_1 \in U$ (resp. $\zeta_2 \in U$) for an arbitrary fixed value of $\zeta_2 \in U$ (resp. $\zeta_1 \in U$).
\end{lemma}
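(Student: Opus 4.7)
The plan is to upgrade pointwise (in $\xi$) holomorphy of the integrand $h(\xi, \zeta_1, \zeta_2) = \nu_\xi \cdot V(\xi, \zeta_1, \zeta_2)$ to holomorphy in $\zeta_1$ of the $L^2(\partial \Omega)$-valued map $\zeta_1 \mapsto A(\zeta_1, \zeta_2)$. The key tool is a uniform-in-$\xi$ estimate for the difference quotient, which one reads off from the explicit rational form of $v$ combined with the two separation conditions built into the choice of $U$.

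First I would verify that for each fixed $\xi \in \partial \Omega$, the function $h(\xi, \cdot, \cdot)$ is jointly holomorphic on $U^2$. This is immediate from the decomposition \eqref{holo_real_imag}: the only singularities of $w_1(\xi, \cdot, \cdot)$ lie at $\zeta_j = 1/\overline{\xi} \in \overline{\Omega^*}$ and those of $w_2(\xi, \cdot, \cdot)$ at $\zeta_j = \xi \in \overline{\Omega}$, and both of these sets are disjoint from $\overline U$ by the standing assumptions $\overline{\Omega} \cap \overline U = \emptyset$ and $\overline{\Omega^*} \cap \overline U = \emptyset$. Fixing $\zeta_1^0 \in U$, $\zeta_2 \in U$, and choosing a closed disk $\overline{\Delta} \subset U$ of some radius $r > 0$ centered at $\zeta_1^0$, the quantities $|\zeta_1 \overline{\xi} - 1|$ and $|\xi - \zeta_1|$ stay bounded uniformly below for $\xi \in \partial \Omega$ and $\zeta_1 \in \overline{\Delta}$.

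The main step is then to apply the Cauchy integral formula to $h(\xi, \cdot, \zeta_2)$ along $\partial \Delta$ and express
\[
\frac{h(\xi, \zeta_1^0 + t, \zeta_2) - h(\xi, \zeta_1^0, \zeta_2)}{t} - \partial_{\zeta_1} h(\xi, \zeta_1^0, \zeta_2)
\]
as a contour integral whose integrand, by the uniform bounds of the previous step, is $O(|t|)$ in $L^\infty(\partial \Omega)$ as $t \to 0$ with $|t| < r/2$. Since $\partial \Omega$ has finite arc length, this $L^\infty$-estimate entails $L^2(\partial \Omega)$-convergence of the difference quotient to $\partial_{\zeta_1} h(\cdot, \zeta_1^0, \zeta_2)$, which is the definition of complex differentiability of $\zeta_1 \mapsto A(\zeta_1, \zeta_2)$ at $\zeta_1^0$. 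The argument in $\zeta_2$ is identical by symmetry of the roles of the two variables.

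I do not anticipate any genuine obstacle here: everything reduces to elementary estimates for the explicit rational expressions in $v$, and the two disjointness conditions chosen for $U$ are precisely what is needed to keep the singularities of $w_1$ and $w_2$ a fixed positive distance away from $\partial \Omega \times U^2$. If one preferred an even softer argument, the pointwise holomorphy together with the uniform $L^2$-boundedness of $\zeta_1 \mapsto h(\cdot, \zeta_1, \zeta_2)$ on compact subsets of $U$ would also yield the conclusion via a Banach-space version of Morera's theorem, but the direct Cauchy-formula estimate above seems to require less machinery.
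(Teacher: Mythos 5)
Your proposal is correct and follows essentially the same strategy as the paper: establish a uniform-in-$\xi$ bound of order $O(|\eta|)$ for the difference quotient minus the pointwise derivative (the paper gets this from a bound on $\partial_{\zeta_1}^2 v_l$ and a line-integral Taylor-remainder argument, you from the Cauchy integral formula on a disk $\overline{\Delta}\subset U$ --- two interchangeable ways of exploiting that the singularities at $\xi$ and $1/\overline{\xi}$ stay a fixed distance from $\overline{U}$), and then convert it to an $L^2(\partial\Omega)$ estimate using the finite arc length of $\partial\Omega$.
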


\begin{proof}
Let $M > 0$ be a real constant that satisfies
 \[
      \bigg| \frac{\partial^2}{\partial \zeta_1^{2}}  v_l(\xi,\zeta_1,\zeta_2) \bigg| \leq M
 \]
 for all $\zeta_1, \zeta_2 \in U$, $\xi \in \partial \Omega$ and $l=1,2$.
Fix arbitrary $\zeta_1 \in U$ and let $r > 0$ be such that 
$\{ w \in \C \, : \, | w - \zeta_1 | < r\}\subset U$. By representing
the difference $v_l(\xi,\zeta_1 + \eta,\zeta_2) - v_l(\xi,\zeta_1,\zeta_2)$
as a complex line integral and subsequently applying the same idea
to the derivative of $v_l$ with respect to $\zeta_1$,
it follows easily that
 \begin{align*}
 &\left|\frac{v_l(\xi,\zeta_1 + \eta,\zeta_2) - 
 v_l(\xi,\zeta_1,\zeta_2)}{\eta} - \frac{\partial v_l}{\partial 
 \zeta_1}(\xi,\zeta_1,\zeta_2)\right|
      \leq \frac{1}{2}M |\eta|, \qquad l=1,2,
 \end{align*}
for all $\xi \in \partial \Omega$, $\zeta_2 \in U$ and 
$0 \not = \eta \in \C$ such that $| \eta | < r$. In consequence,
 \begin{align*}
      \left\| \frac{ h(\cdot,\zeta_1 + \eta,\zeta_2) - 
 h(\cdot,\zeta_1,\zeta_2)}{\eta} - \frac{\partial h}{\partial 
 \zeta_1}(\cdot,\zeta_1,\zeta_2) \right\|_{L^2(\partial \Omega)}
      \leq  M |\eta| \sqrt{|\partial \Omega|},
 \end{align*}
which means that $A$ is holomorphic in $\zeta_1$. 
The same argument can  be applied to~$\zeta_2$, and the claim follows.
\end{proof}

\begin{lemma}\label{lemma:bilinear:derivative}
Let $X$ and $Y$ be complex Banach spaces, $\la \cdot, \cdot \ra : X \times Y \rightarrow \C$ a bounded bilinear form, and $f : U \rightarrow X$, $g : U \rightarrow Y$ differentiable in an open set $U \subset \C$ (resp. $U \subset \R$). Then,
\begin{equation*}
	\frac{\d}{\d z} \la f(z), g(z) \ra = \la f'(z), g(z) \ra + \la f(z), g'(z) \ra
\end{equation*}
for any $z \in U$. In particular, if $f$ and $g$ are holomorphic, then
so is the map $U \ni z \mapsto \la f(z), g(z) \ra \in \C$.
\end{lemma}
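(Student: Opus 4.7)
The plan is to reduce this to the usual Leibniz rule argument that works for scalars, replacing absolute values by the bilinear form bound $|\langle x, y\rangle| \le C\|x\|_X \|y\|_Y$. Fix $z \in U$ and a small $h \in \C$ (or $\R$) with $z+h \in U$. I would form the difference quotient and insert a standard add/subtract term:
\[
\frac{\langle f(z+h), g(z+h)\rangle - \langle f(z), g(z)\rangle}{h}
= \Big\langle \tfrac{f(z+h)-f(z)}{h}, g(z+h)\Big\rangle + \Big\langle f(z), \tfrac{g(z+h)-g(z)}{h}\Big\rangle.
\]
As $h \to 0$, the first slot of the first term tends to $f'(z)$ in $X$ and the second slot tends to $g(z)$ in $Y$ by continuity of $g$ (which follows from differentiability). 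Since $\langle \cdot, \cdot \rangle$ is continuous and bilinear (so in particular jointly continuous on bounded sets, and $g(z+h)$ stays bounded for small $h$), the first term converges to $\langle f'(z), g(z)\rangle$. The second term converges to $\langle f(z), g'(z)\rangle$ by the analogous argument with $f(z)$ fixed.

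To make the error control clean, I would write $f(z+h) = f(z) + hf'(z) + r_f(h)$ with $\|r_f(h)\|_X/|h| \to 0$ and similarly for $g$, and then expand
\[
\langle f(z+h), g(z+h)\rangle = \langle f(z), g(z)\rangle + h\langle f'(z), g(z)\rangle + h\langle f(z), g'(z)\rangle + E(h),
\]
where $E(h)$ gathers all the cross terms containing at least one factor of $r_f(h)$ or $r_g(h)$, or the quadratic term $h^2\langle f'(z), g'(z)\rangle$. Using $|\langle u,v\rangle| \le C\|u\|_X\|v\|_Y$ and the local boundedness of $f, g, f', g'$ near $z$, each such term is $o(|h|)$, so dividing by $h$ and sending $h \to 0$ yields the product rule.

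For the final assertion, suppose $U \subset \C$ is open and $f, g$ are holomorphic (i.e., complex differentiable on $U$). The computation above shows the scalar map $z \mapsto \langle f(z), g(z)\rangle$ has a complex derivative at every $z \in U$, so it is holomorphic on $U$. No obstacle is expected here; the only point requiring a little care is verifying $E(h) = o(|h|)$ using the bilinear bound rather than an inner product inequality, but this is routine once the decomposition above is written down.
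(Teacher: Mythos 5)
Your proposal is correct and follows exactly the route the paper intends: the paper's proof consists of the single remark that ``the assertion follows from essentially the same argument as the standard product rule of calculus,'' which is precisely the difference-quotient/Leibniz argument you carry out, with the bilinear bound $|\la u,v\ra|\le C\|u\|_X\|v\|_Y$ controlling the error terms. You have simply written out in full the details the authors chose to omit.
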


\begin{proof}
The assertion follows from essentially the same argument as 
the standard product rule of calculus.
\end{proof}

\begin{theorem}
The bisweep data extend to a holomorphic function $\varsigma_\sigma : U^2 \rightarrow \C$, where $U$ is an open neighborhood of $\partial D \subset \C$.
\end{theorem}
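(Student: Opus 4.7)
The plan is to combine the two preceding lemmas with the factorization from Theorem~\ref{thm:factorization} and to conclude joint holomorphy from separate holomorphy via Hartogs' theorem. Concretely, the bisweep data are represented by
\[
\varsigma_\sigma(\zeta_1,\zeta_2) = \langle A(\zeta_1,\zeta_2),\, G A(\zeta_1,\zeta_2) \rangle_{\partial \Omega},
\]
where the angle bracket is to be read as the \emph{bilinear} (no conjugation) duality pairing on $L^2(\partial \Omega)$ induced by $\int_{\partial \Omega} f\, g\, \d s$; this is legitimate because the integrand $h(\xi,\zeta_1,\zeta_2)$ in \eqref{extension} is the holomorphic extension constructed via $v_1,v_2$, and hence no complex conjugation can be taken without destroying holomorphy.

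First I would fix $\zeta_2 \in U$ and verify that the map $\zeta_1 \mapsto G A(\zeta_1,\zeta_2)$ is holomorphic from $U$ to $L^2(\partial \Omega)$. This is immediate: by Lemma~\ref{lemma:A:holomorphic}, $\zeta_1 \mapsto A(\zeta_1,\zeta_2)$ is holomorphic into $L^2(\partial \Omega)$, and $G$ is a bounded linear operator on $L^2(\partial \Omega)$ (by the amended interpretation following Theorem~\ref{thm:factorization}), so composing with $G$ preserves the difference-quotient estimate underlying holomorphy. Next, the pairing $(f,g) \mapsto \int_{\partial \Omega} f g \, \d s$ is a bounded bilinear form on $L^2(\partial \Omega) \times L^2(\partial \Omega)$ by Cauchy--Schwarz. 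Consequently, Lemma~\ref{lemma:bilinear:derivative} applies with $f(\zeta_1) = A(\zeta_1,\zeta_2)$ and $g(\zeta_1) = G A(\zeta_1,\zeta_2)$, giving that $\zeta_1 \mapsto \varsigma_\sigma(\zeta_1,\zeta_2)$ is holomorphic on $U$. Interchanging the roles of $\zeta_1$ and $\zeta_2$ yields holomorphy in $\zeta_2$ for each fixed $\zeta_1 \in U$.

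At this stage $\varsigma_\sigma$ is separately holomorphic on $U^2 \subset \C^2$. I would then invoke Hartogs' theorem on separate analyticity to conclude that $\varsigma_\sigma$ is jointly holomorphic as a function of two complex variables on $U^2$. Alternatively, since the separate estimates in the previous step are actually locally uniform in the other variable (the bound $M$ in Lemma~\ref{lemma:A:holomorphic} can be chosen uniformly on compact subsets of $U^2$, and $G$ is bounded), local boundedness is automatic and Osgood's lemma could be used in place of the full Hartogs theorem.

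The only delicate point is bookkeeping about the bilinear versus sesquilinear interpretation of the pairing: the original real definition \eqref{bisweep} is a duality pairing which over the reals coincides with the $L^2$ inner product, but for the holomorphic extension one must use the bilinear version, for otherwise $G A(\zeta_1,\zeta_2)$ would appear as $\overline{G A(\zeta_1,\zeta_2)}$ and joint holomorphy would fail. Once this identification is in place, no additional obstacles arise, and the argument is essentially a routine assembly of Lemmas~\ref{lemma:A:holomorphic} and~\ref{lemma:bilinear:derivative} with Hartogs' theorem.
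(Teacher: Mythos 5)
Your proposal is correct and follows essentially the same route as the paper: separate holomorphy in each variable via Lemma~\ref{lemma:A:holomorphic} and the product rule of Lemma~\ref{lemma:bilinear:derivative} applied to a bounded bilinear (not sesquilinear) pairing involving the bounded operator $G$, followed by Hartogs' theorem to upgrade to joint holomorphy on $U^2$. The only cosmetic difference is that the paper absorbs $G$ into the bilinear form $(p,q)\mapsto\int_{\partial\Omega}p\,(Gq)\,\d s$ rather than into the second argument, and your remark about avoiding complex conjugation is exactly the point the paper's choice of pairing is designed to handle.
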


\begin{proof}
We have interpreted $G$ as a bounded operator from $L^2(\partial \Omega)$ to 
itself, which makes
\[
	(p,q) \mapsto \langle p, G q \rangle_{\partial \Omega} = \int_{\partial \Omega} p(\xi) (Gq)(\xi) \d s_\xi
\]
a bounded bilinear form on $L^2(\partial \Omega) \times L^2(\partial \Omega)$. By 
\eqref{extension} and Lemmas \ref{lemma:A:holomorphic} and \ref{lemma:bilinear:derivative}, the extension $\varsigma_\sigma : U^2 \rightarrow \C$ is holomorphic in either variable if the other has an arbitrary fixed value. Due to the Hartogs' theorem 
\cite[Theorem~2.2.8]{Hormander73}, this means that $\varsigma_\sigma$ is, in fact, analytic in $U^2$, that is, it locally coincides with its multi-dimensional complex Taylor series.
\end{proof}

\begin{corollary}
The angular bisweep data $\tilde \varsigma_\sigma : \R^2 \rightarrow \R$,
\begin{equation}
\label{eq:def:angular:bisweep}
	\tilde \varsigma_\sigma(\theta_1,\theta_2) := \varsigma_\sigma(e^{i\theta_1},e^{i\theta_2}),
\end{equation}
is an analytic function.
\end{corollary}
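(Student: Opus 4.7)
The plan is to exhibit $\tilde\varsigma_\sigma$ as the restriction to $\R^2$ of a holomorphic function of two complex variables defined on an open neighborhood of $\R^2$ in $\C^2$. Real analyticity then follows from the standard characterization: a function on an open subset of $\R^n$ is real analytic if and only if it extends to a holomorphic function on some open neighborhood in $\C^n$.

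First I would use that the exponential map $\C \ni z \mapsto e^{iz} \in \C$ is entire. For every $\theta_0 \in \R$, the value $e^{i\theta_0}$ lies on $\partial D$ and hence in the open set $U$ from the previous theorem; by continuity of the exponential, there is an open neighborhood $W_{\theta_0} \subset \C$ of $\theta_0$ whose image under $z \mapsto e^{iz}$ is contained in $U$. The union $\mathcal{W} := \bigcup_{\theta_0 \in \R} W_{\theta_0}$ is then an open neighborhood of $\R$ in $\C$ on which $z \mapsto e^{iz}$ takes values in $U$.

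Next I would define
\[
\Phi(z_1,z_2) := \varsigma_\sigma(e^{iz_1}, e^{iz_2}), \qquad (z_1,z_2) \in \mathcal{W}^2,
\]
which is a composition of the holomorphic extension $\varsigma_\sigma : U^2 \to \C$ with the entire map $(z_1,z_2) \mapsto (e^{iz_1}, e^{iz_2})$, and is therefore holomorphic on the open set $\mathcal{W}^2 \subset \C^2$. Since $\R^2 \subset \mathcal{W}^2$ and $\Phi|_{\R^2} = \tilde\varsigma_\sigma$ takes real values (indeed, on $\R^2$ it agrees with the original real-valued bisweep data $\varsigma_\sigma$ on $\partial D \times \partial D$, which is real by \eqref{bisweep} and the self-adjointness of $\Lambda_\sigma - \Lambda_\bg$), this displays $\tilde\varsigma_\sigma$ as the real locus of a holomorphic function of two complex variables, proving analyticity.

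There is no real obstacle here; the corollary is an essentially immediate consequence of the preceding theorem together with the entirety of $z \mapsto e^{iz}$. The only point requiring minor care is ensuring that the composition stays inside the domain $U^2$ on a complex (not merely real) neighborhood of $\R^2$, which the construction of $\mathcal{W}$ handles explicitly.
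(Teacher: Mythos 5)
Your proof is correct and follows essentially the same route as the paper: both construct an open complex neighborhood $V$ (your $\mathcal{W}$) of $\R$ with $e^{iV}\subset U$, observe that the composition with the holomorphic extension $\varsigma_\sigma:U^2\to\C$ is holomorphic by the chain rule, and conclude real analyticity by restriction to $\R^2$. Your version merely spells out the construction of the neighborhood and the real-valuedness in slightly more detail than the paper does.
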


\begin{proof}
Clearly, the definition \eqref{eq:def:angular:bisweep} can be extended to some open set $V^2 \subset \C^2$ such that $\R \subset V$ and $e^{i V} \subset U$, and by the chain rule, it is a holomorphic function of two variables. As the restriction of such a function to $\R^2$, the angular sweep data is analytic, that is, locally coincides with its two-dimensional real Taylor series.
\end{proof}

In what follows, we denote by $D_j$ the derivative with respect to the 
$j$th variable. Due to the theory of analytic continuation, we have

\begin{corollary}\label{corollary:bisweep:derivatives}
The (angular) bisweep data is completely determined by the set of its derivatives
\begin{equation*}
	\{ D_{1}^j D_{2}^k \tilde \varsigma_\sigma(\theta_1,\theta_2) \;:\; j,k \in \mathbb \N_0 \}
\end{equation*}
at an arbitrary point $(\theta_1, \theta_2) \in \R^2$.
\end{corollary}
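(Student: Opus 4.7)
The plan is to combine the preceding corollary --- which establishes that $\tilde\varsigma_\sigma$ is real analytic on all of $\R^2$ --- with the standard identity principle for real analytic functions on a connected domain. Because $\R^2$ is connected, there is room for a straightforward open/closed argument that propagates the local Taylor information from $(\theta_1,\theta_2)$ to every point of $\R^2$.

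First, I would observe that the prescribed derivatives at $(\theta_1,\theta_2)$ are precisely the coefficients of the two-variable Taylor expansion of $\tilde\varsigma_\sigma$ centered there. By the preceding corollary, this Taylor series converges to $\tilde\varsigma_\sigma$ on some open polydisc around the fixed point, so the given data determine $\tilde\varsigma_\sigma$ in a neighborhood of $(\theta_1,\theta_2)$. To upgrade this local determination to a global one, I would let $\tilde\eta : \R^2 \to \R$ be any other real analytic function with the same derivatives at $(\theta_1,\theta_2)$ as $\tilde\varsigma_\sigma$ and consider
\[
	S := \{ (s_1,s_2) \in \R^2 \,:\, D_1^j D_2^k (\tilde\varsigma_\sigma - \tilde\eta)(s_1,s_2) = 0 \text{ for all } j,k \in \N_0 \}.
\]
The set $S$ is closed by continuity of the partial derivatives; it is open because, at any of its points, the previous corollary applied to the real analytic function $\tilde\varsigma_\sigma - \tilde\eta$ forces the local Taylor expansion --- and hence the function itself --- to vanish on a polydisc. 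Since $\R^2$ is connected and $(\theta_1,\theta_2) \in S$, I conclude $S = \R^2$ and therefore $\tilde\varsigma_\sigma \equiv \tilde\eta$, which is the desired uniqueness.

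I do not foresee any serious obstacle in executing this plan: the nontrivial content has already been supplied by the preceding corollary, which provides both the real analyticity of $\tilde\varsigma_\sigma$ and --- crucially --- the fact that the local Taylor expansions have positive radii of convergence at \emph{every} point of $\R^2$, not merely at the distinguished point $(\theta_1,\theta_2)$. Without this uniform analyticity the openness of $S$ could fail, but with it in hand the remainder is textbook analytic continuation on a connected set.
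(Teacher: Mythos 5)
Your proposal is correct and matches the paper's approach: the paper simply asserts that the corollary follows from "the theory of analytic continuation" applied to the real analyticity established in the preceding corollary, and your open-closed connectedness argument is exactly the standard proof of that identity principle, written out in full.
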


\subsection{Generalization to smooth domains}
\label{sec:extension2}
Let us then adopt the setting of Theorem~\ref{thm:main} and assume, in particular, that 
$D \subset \R^2$ is a simply connected and bounded domain with a 
$\mathscr{C}^\infty$-boundary.

Let $\Phi$ be a conformal map of the open unit disk $B$ onto $D$. According to
\cite[Section 3.3]{Pommerenke92}, $\Phi|_{\partial B}$ in turn defines
a smooth diffeomorphism of $\partial B$ onto $\partial D$. 
In the spirit of  \eqref{eq:def:angular:bisweep}, we introduce the 
angular bisweep data $\tilde \varsigma_\sigma : \R^2 \rightarrow \R$,
\begin{equation}
\label{eq:def:angular:bisweep2}
	\tilde \varsigma_\sigma(\theta_1,\theta_2) := \varsigma_\sigma\big(\Phi(e^{i\theta_1}),\Phi(e^{i\theta_2})\big),
\end{equation}
and generalize Corollary \ref{corollary:bisweep:derivatives} to our new
framework.

\begin{corollary}\label{corollary:bisweep:derivatives2}
Assume that $D$ satisfies the above assumptions.
Then, the angular bisweep data \eqref{eq:def:angular:bisweep2}
is completely determined by the set of its derivatives
\begin{equation}
\label{eq:bisweep:angular:derivatives2}
	\{ D_{1}^j D_{2}^k \tilde \varsigma_\sigma(\theta_1,\theta_2) \;:\; j,k \in \mathbb \N_0 \}
\end{equation}
at an arbitrary point $(\theta_1, \theta_2) \in \R^2$.
\end{corollary}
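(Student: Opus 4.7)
The plan is to reduce Corollary~\ref{corollary:bisweep:derivatives2} to its unit-disk version, Corollary~\ref{corollary:bisweep:derivatives}, by transplanting the whole problem to $B$ through the conformal map $\Phi$. First I would define the natural pullback conductivity on $B$ by
\[
\tilde\sigma(\zeta) := |\det D\Phi(\zeta)|\,(D\Phi(\zeta))^{-1}\,\sigma(\Phi(\zeta))\,(D\Phi(\zeta))^{-\mathrm T},
\]
tailored so that $u \mapsto u\circ\Phi$ is a bijection between the respective solution spaces of the conductivity equations on $D$ and on $B$. Since $\Phi$ is conformal, $D\Phi = |\Phi'|R$ with $R$ orthogonal, hence $\tilde\sigma$ collapses to $R^{\mathrm T}(\sigma\circ\Phi)R$; in particular $\tilde\sigma$ is symmetric, uniformly positive, identically $I$ wherever $\sigma\circ\Phi = I$, and $\mathrm{supp}(\tilde\sigma-I) = \Phi^{-1}(\Sigma)\csubset B$. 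Thus the pair $(B,\tilde\sigma)$ falls within the framework of Section~\ref{sec:extension}, and Corollary~\ref{corollary:bisweep:derivatives} applies to its angular bisweep data.

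The crux would then be a change-of-variables identity
\[
\varsigma_\sigma(\Phi(\zeta_1),\Phi(\zeta_2)) \,=\, \varsigma_{\tilde\sigma}^B(\zeta_1,\zeta_2), \qquad \zeta_1,\zeta_2 \in \partial B,
\]
where $\varsigma_{\tilde\sigma}^B$ is the bisweep data associated with $(B,\tilde\sigma)$. To verify it, I would combine two transformation rules: the conformal Neumann identity $\partial_\nu(u\circ\Phi) = |\Phi'|(\partial_\nu u \circ \Phi)$ on $\partial B$, and the distributional cancellation $|\Phi'|(\delta_{\Phi(\zeta)}\circ\Phi) = \delta_\zeta$ arising from $\mathrm d s_D = |\Phi'|\,\mathrm d s_B$. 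Together they imply that a Neumann current $\delta_{z_1}-\delta_{z_2}$ on $\partial D$ pulls back to $\delta_{\zeta_1}-\delta_{\zeta_2}$ on $\partial B$ without any leftover Jacobian, so the background and full solutions identify via $\Phi$ and the claimed identity follows from $\varsigma_\sigma(z_1,z_2) = (u_\sigma - u_\bg)(z_1) - (u_\sigma - u_\bg)(z_2)$, which is well defined because only a difference of boundary values enters the quotient space $\mathscr D(\partial D)/\C$.

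Plugging this into \eqref{eq:def:angular:bisweep2} yields $\tilde\varsigma_\sigma(\theta_1,\theta_2) = \varsigma_{\tilde\sigma}^B(e^{i\theta_1}, e^{i\theta_2})$, so the general-domain angular bisweep data coincides with the unit-disk angular bisweep data for $(B,\tilde\sigma)$. Corollary~\ref{corollary:bisweep:derivatives} shows the right-hand side is real analytic on $\R^2$, whence by the identity principle for real-analytic functions, $\tilde\varsigma_\sigma$ is determined on $\R^2$ by the collection of its partial derivatives at any fixed point, which is the assertion. The main hurdle is the distributional bookkeeping in the change-of-variables step --- ensuring that the $|\Phi'|$ factor from the normal derivative exactly cancels the $|\Phi'|^{-1}$ from pulling back the boundary delta distribution; once this is clean, the reduction is essentially automatic and no new holomorphic-extension argument needs to be redeveloped for the non-disk $D$.
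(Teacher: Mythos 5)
Your proposal is correct and takes essentially the same route as the paper: reduce to the unit disk via the conformal pullback conductivity $\sigma^* = J_\Phi^{-1}(\sigma\circ\Phi)(J_\Phi^{-1})^{\rm T}\det J_\Phi$ (which coincides with your $\tilde\sigma$), observe that it remains an admissible conductivity equal to $I$ near $\partial B$, identify the transplanted bisweep data with that of $(B,\sigma^*)$, and invoke Corollary~\ref{corollary:bisweep:derivatives}. The only difference is that the paper outsources the change-of-variables identity for the bisweep data to \cite[Theorem 3.2]{Hakula11}, whereas you sketch the Jacobian cancellation for the pulled-back delta currents directly.
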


\begin{proof}
It follows directly from the argument in the proof of 
\cite[Theorem 3.2]{Hakula11} that
\begin{equation}
\label{conformal_sweep}
\varsigma_{\sigma^*} := 
\varsigma_\sigma(\Phi|_{\partial B}(\cdot), \Phi|_{\partial B}(\cdot)) : 
\partial B \times \partial B \to \R
\end{equation}
is the bisweep data corresponding to the unit disk and the pull-back
conductivity 
$$
\sigma^*:= J_\Phi^{-1} (\sigma \circ \Phi) (J_\Phi^{-1})^{\rm T} \det J_\Phi,
$$
where $J_\Phi$ is the Jacobian matrix of $\Phi$ (interpreted as a map 
from $\R^2$ to itself).
Notice that $\sigma^*$ is a feasible conductivity, i.e., it is strictly
positive definite and equals one in some interior neighborhood of $\partial B$,
due to the basic properties of conformal mappings (cf.~\cite[Section~3]{Hakula11}).

By definition, 
$$
\tilde{\varsigma}_{\sigma^*} = \tilde{\varsigma}_\sigma
$$
where $\tilde{\varsigma}_{\sigma^*}$ is the angular sweep data for $\sigma^*$ 
on $\partial B$ defined by \eqref{eq:def:angular:bisweep} and  
$\tilde{\varsigma}_\sigma$ is given by~\eqref{eq:def:angular:bisweep2}.
According to Corollary~\ref{corollary:bisweep:derivatives}, 
$\tilde{\varsigma}_{\sigma^*}$ is determined by the set of its derivatives~\eqref{eq:bisweep:angular:derivatives2}, which completes the proof.
\end{proof}

\begin{remark}
\label{remark2}
  The smoothness of $\partial D$ is needed for Corollary~\ref{corollary:bisweep:derivatives2} so that the definition \eqref{bisweep} makes sense --- for which
less regularity would certainly suffice --- and that the assumptions of 
\cite[Theorem~3.2]{Hakula11} are satisfied. In consequence, 
  if \cite[Theorem~3.2]{Hakula11} extends to more
  general domains with less regular boundaries (as it does), 
  Corollary~\ref{corollary:bisweep:derivatives2} adopts 
the corresponding regularity  assumptions on $\partial D$. 
 This same reduction of smoothness carries over
to Theorem~\ref{thm:main} and Corollary~\ref{cor:main} with the extra
requirement of \emph{local} $\mathscr{C}^\infty$-smoothness around $z\in \partial D$, 
as apparent from the proof presented in the following section.
\end{remark}

\subsection{Uniqueness by point measurements}
\label{sec:uniqueness}
Let us adopt the assumptions and the notation of Section \ref{sec:extension2}. 
We will prove the claim of Theorem \ref{thm:main} by utilizing the 
pointwise-supported mean-free distributions $\{ \delta_\theta^{(j)} \}_{j=1}^\infty \subset
\mathfrak{D}_{\Phi(e^{i \theta})}$ defined via
$$
\langle \delta_\theta^{(j)}, \varphi \rangle_{\partial D} = 
\frac{{\rm d}^j}{{\rm d} \vartheta^j} \varphi(\Phi(e^{i \vartheta}))|_{\vartheta=\theta},
\qquad \varphi \in \mathscr{D}(\partial D), \ \ j \in \N.
$$ 
We extend this definition in the natural way to the case $j=0$
by requiring that $\langle \delta_\theta^{(0)}, \varphi 
\rangle_{\partial D} = \langle \delta_\theta, \varphi 
\rangle_{\partial D} = \varphi(\Phi(e^{i \theta}))$, but note that this
standard (angular) delta distribution is not mean-free. 
It follows easily from the Sobolev embedding theorem that the mapping $\theta \mapsto \delta^{(k)}_{\theta}$ is differentiable, say, from $\R$ to $H^{-k-3}(\partial D)$, $k \in \N_0$, and that the corresponding derivative  is $\theta \mapsto \delta^{(k+1)}_\theta$. 

The angular bisweep data of \eqref{eq:def:angular:bisweep2}
allows the representation
$$
\tilde{\varsigma}_{\sigma}(\theta_1,\theta_2) = \left\langle (\delta_{\theta_1} - \delta_{\theta_2}), (\Lambda_\sigma - \Lambda_\bg) (\delta_{\theta_1} - \delta_{\theta_2}) \right \rangle_{\partial D}
$$
for any $\theta_1, \theta_2 \in \R$.
Obviously,
$$
\tilde{\varsigma}_{\sigma}(\theta,\theta) = 0
$$
for any $\theta \in \R$.
Due to Lemma~\ref{lemma:bilinear:derivative} and the boundedness (and self-adjointness) of 
$\Lambda_{\sigma} - \Lambda_\bg: H^{s}_\diamond(\partial D) \to H^{-s}(\partial D)/\C$ for any $s \in \R$, we have
\begin{equation}
\label{eq:bisweep:first:derivative}
D_1 \tilde{\varsigma}_{\sigma}(\theta_1,\theta_2)
= 2 \la \delta^{(1)}_{\theta_1}, (\Lambda_{\sigma} - \Lambda_\bg) 
(\delta_{\theta_1} - \delta_{\theta_2}) \ra,
\end{equation}
which vanishes for $\theta_1 = \theta_2 = \theta$. Similarly, 
the $k$th derivative of the angular bisweep data with respect to the first variable at $(\theta_1,\theta_2)$ reads
\begin{equation}
\label{eq:bisweep:kth:derivative}
	D_1^k \tilde{\varsigma}_{\sigma}(\theta_1,\theta_2)
	= 2 \la \delta^{(k)}_{\theta_1}, (\Lambda_{\sigma} - \Lambda_\bg) 
(\delta_{\theta_1} - \delta_{\theta_2}) \ra +
	\sum_{j=1}^{k-1} {k \choose j} \la  \delta^{(j)}_{\theta_1}, (\Lambda_{\sigma} - \Lambda_\bg)  \delta^{(k-j)}_{\theta_1} \ra,
\end{equation} 
meaning that
\[
	D_1^k \tilde{\varsigma}_{\sigma}(\theta,\theta)
	= \sum_{j=1}^{k-1} {k \choose j} \la \delta^{(j)}_{\theta}, (\Lambda_{\sigma} - \Lambda_\bg) \delta^{(k-j)}_{\theta} \ra, \qquad \theta \in \R, \ k \geq 2.
\]
It clearly holds that $D_2^k \tilde{\varsigma}_{\sigma}(\theta,\theta) = D_1^k \tilde{\varsigma}_{\sigma}(\theta,\theta)$ for any $\theta \in \R$ and 
$k \in \N$.
Moreover, taking the $l$th derivative of \eqref{eq:bisweep:first:derivative} and \eqref{eq:bisweep:kth:derivative} with respect to the second variable results in
\begin{equation*}
	D_1^k D_2^l  \tilde{\varsigma}_{\sigma}(\theta_1,\theta_2)
	= -2 \la \delta^{(k)}_{\theta_1}, (\Lambda_{\sigma} - \Lambda_\bg) \delta^{(l)}_{\theta_2} \ra, \qquad \theta_1, \theta_2 \in \R, \  k, l \in \N.
\end{equation*}
In consequence, we have altogether 
shown that any partial derivative of the angular 
bisweep data at $\theta_1= \theta_2 = \theta \in \R$ is either known 
to vanish or can be given as a linear combination of terms of the form
$$
\langle f,
(\Lambda_\sigma - \Lambda_\bg) g \rangle_{\partial D}, 
\qquad f, g \in \mathfrak{D}_z,
$$
where $z = \Phi(e^{i \theta})$ can be chosen arbitrarily via the choice
of $\theta$. Due to the standard polarization identity 
$$
4 \langle f,
(\Lambda_\sigma - \Lambda_\bg) g \rangle_{\partial D} = 
\langle (f+g),
(\Lambda_\sigma - \Lambda_\bg) (f+g) \rangle_{\partial D} - \langle (f-g),
(\Lambda_\sigma - \Lambda_\bg) (f-g) \rangle_{\partial D},
$$ 
this means that the knowledge
of the point measurements \eqref{point} for all $f \in \mathfrak{D}_z$ 
(with fixed $z \in \partial D$) implies
the knowledge of all the derivatives 
\eqref{eq:bisweep:angular:derivatives2} at the corresponding
polar angle $\theta_1 = \theta_2 = \theta$.

The statement of Theorem~\ref{thm:main} follows now from 
Corollary~\ref{corollary:bisweep:derivatives2} and 
Theorem~\ref{thm:polarization}.

\section*{Acknowledgements}
Nuutti Hyv\"onen and Petteri Piiroinen would like to thank John Sylvester,
Bastian Harrach, Lauri Oksanen, Roland Griesmaier and Martin Simon for
the discussions about two-electrode measurements at the Oberwolfach
workshop on Inverse Problems for Partial Differential Equations (ID: 1208b)
organized by Martin Hanke, Andreas Kirsch, William Rundell and Matti Lassas.


\begin{thebibliography}{99}

\bibitem{Alessandrini12}
G. Alessandrini and K. Kyoungsun,
{\em Single-logarithmic stability for the Calder\'on problem with local data},
arXiv:1202.5485v1.

\bibitem{Astala06}
K. Astala  and L. P{\"a}iv{\"a}rinta,
{\em Calder\'on's inverse conductivity problem in the plane},
Ann. of Math., 163 (2006), pp.~265--299.

\bibitem{Astala05}
K. Astala, M. Lassas, and L. P\"aiv\"arinta, {\em Calder\'on's inverse 
problem for anisotropic conductivity in the plane}, 
Comm. Partial Differential Equations, 30 (2005), pp.~207--224.


\bibitem{Brown03}
R. Brown and R. Torres, {\em Uniqueness in the inverse conductivity problem for conductivities with $3/2$ derivatives in $L^p$, $p > 2n$}, 
J. Fourier Anal. Appl., 9 (2003), 536--574.

\bibitem{Brown97}
R. Brown and G. Uhlmann, {\em Uniqueness in the inverse conductivity problem  for nonsmooth conductivities in two dimensions}, Comm. Partial Differential Equations, 22 (1997), pp.~1009--1027.

\bibitem{Bukhgeim02}
A. Bukhgeim and G. Uhlmann, {\em Recovering a potential from partial 
Cauchy data}, Comm. Partial Differential Equations, 27 (2002), pp.~653--668.

\bibitem{Calderon80} 
A.~P. Calder\'on, {\em On an inverse boundary value problem},
in Seminar on Numerical Analysis and its Application to Continuum
Physics,  W.~H.~Meyer and M.~A.~Raupp, eds.,  Brasil. Math. Soc., Rio de Janeiro, 1980, 
pp.~65--73.

\bibitem{Cheney99} M. Cheney, D. Isaacson, and J.~C. Newell, {\em Electrical
impedance tomography}, SIAM Rev., 41 (1999), pp.~85--101.

\bibitem{Cheng89}
K.-S. Cheng, D. Isaacson, J. C. Newell, and D. G. Gisser, {\em Electrode models
for electric current computed tomography}, IEEE Trans. Biomed. Eng.,
36 (1989), pp.~918--924.

\bibitem{Gebauer08}
B. Gebauer, {\em Localized potentials in electrical impedance tomography},
Inverse Probl. Imaging, 2 (2008), pp.~251--269.

\bibitem{Hakula11}
H. Hakula, L. Harhanen, and N. Hyv\"onen, {\em Sweep data of electrical impedance tomography}, Inverse Problems, 27 (2011), p.~115006.

\bibitem{Hanke11b}
M. Hanke, {\em Locating several small inclusions in impedance tomography from backscatter data}, SIAM J. Numer. Anal., 49 (2011), pp.~1991--2016.

\bibitem{Hanke11c}
M. Hanke, B. Harrach, and N. Hyv\"onen, {\em Justification of point electrode models in electrical impedance tomography}, Math. Models Methods in Appl. Sci., 21 (2011), pp.~1395--1413.

\bibitem{Hanke11a}
M. Hanke, N. Hyv\"onen, and S. Reusswig, {\em Convex backscattering support in electric impedance tomography}, Numer. Math., 117 (2011), pp.~373--396.

\bibitem{Hormander73}
L. H\"ormander, {\em An introduction to complex analysis in several variables}, North-Holland, Amsterdam,
1973.

\bibitem{Hyvonen11}
N. Hyv\"onen and O. Seiskari, {\em Detection of multiple inclusions from sweep data of electrical impedance tomography}, submitted.

\bibitem{Imanuvilov10}
O. Yu. Imanuvilov, G. Uhlmann, and M. Yamamoto, {\em The Calder\'on problem 
with partial data in two dimensions}, J. Amer. Math. Soc., 23 (2010), 
pp.~655--691.

\bibitem{Isakov07}
V. Isakov, {\em On uniqueness in the inverse conductivity problem with local data}, Inverse Probl. Imaging, 1 (2007), pp.~95--105.

\bibitem{Kenig07}
C. Kenig, J. Sj\"ostrand, and G. Uhlmann, {\em The Calder\'on problem with 
partial data}, Ann. of Math., 165 (2007), pp.~567--591.

\bibitem{Kohn84}
R. Kohn and M. Vogelius, {\em Determining conductivity by boundary 
measurements}, Comm. Pure Appl. Math., 37 (1984), pp.~289--298.

\bibitem{Kohn85}
R. Kohn and M. Vogelius, {\em Determining conductivity by boundary 
measurements. II. Interior results}, Comm. Pure Appl. Math., 38 (1985), pp.~643--667.

\bibitem{Knudsen06}
K. Knudsen, {\em The Calder\'on problem with partial data for less smooth 
conductivities}, Comm. Partial Differential Equations, 31 (2006), pp.~57--71.

\bibitem{Lee89}
J. M. Lee and G. Uhlmann, {\em Determining anisotropic real-analytic 
conductivities by boundary measurements}, Comm. 
Pure Appl. Math., 42 (1989), pp.~1097--1112.

\bibitem{Lions72} 
J.-L.~Lions and E.~Magenes,
{\em Non-Homogeneous Boundary Value Problems and Applications}, Vol.~I,
Springer, Berlin, 1972.

\bibitem{Nachman96}
A.~I. Nachman, {\em Global uniqueness for a two-dimensional inverse 
boundary value problem}, Ann. of Math., 143 (1996), pp.~71--96.

\bibitem{Paivarinta03}
L. P\"aiv\"arinta, A. Panchenko, and G. Uhlmann, {\em 
Complex geometrical optics solutions for Lipschitz conductivities}, Rev. Mat. Iberoamericana, 
19 (2003), 57--72.

\bibitem{Pommerenke92}
Ch. Pommerenke, {\em Boundary Behaviour of Conformal Maps},
Springer, Berlin, 1992.

\bibitem{Somersalo92} E. Somersalo, M. Cheney, and D. Isaacson, 
{\em Existence and uniqueness for electrode models for electric current
computed tomography}, SIAM J. Appl. Math., 52 (1992), pp.~1023--1040.

\bibitem{Sun03}
Z. Sun and G. Uhlmann, {\em Anisotropic inverse problems in two dimensions}, Inverse Problems, 19 (2003), pp.~1001--1010.

\bibitem{Sylvester90}
J. Sylvester, {\em An anisotropic inverse boundary value problem}, 
Comm. Pure Appl. Math., 43 (1990), pp.~201--232.

\bibitem{Sylvester87}
J. Sylvester and G. Uhlmann,
{\em A global uniqueness theorem for an inverse boundary value problem},
Ann. of Math., 125 (1987), pp.~153--169.

\end{thebibliography}
\end{document}